\documentclass{amsart}
\subjclass[2020]{Primary 30C62; Secondary 31A05, 31B05, 35J25}

\keywords{
quasiconformal mappings,
harmonic functions,
Lipschitz continuity,
Lyapunov domains
}
\usepackage{amsmath,amssymb,amsthm, graphicx}
\newtheorem{theorem}{Theorem}[section]
\newtheorem{lemma}[theorem]{Lemma}
\newtheorem{remark}[theorem]{Remark}
\newcommand{\D}{\mathbb{D}}

\newcommand{\R}{\mathbb{R}}
\newcommand{\dist}{\operatorname{dist}}

\usepackage{titlesec}

\titleformat{\section}
  {\normalfont\large\bfseries\centering}
  {\thesection}{1em}{}

\titleformat{\subsection}
  {\normalfont\normalsize\bfseries\centering}
  {\thesubsection}{1em}{}

\titleformat{\subsubsection}
  {\normalfont\normalsize\bfseries\centering}
  {\thesubsubsection}{1em}{}

\usepackage{fancyhdr}
\title{Lipschitz regularity of harmonic quasiconformal maps
between Lyapunov domains in $\mathbb{R}^n$}

\author{Anton Gjokaj}
\address{University of Montenegro, Faculty of Natural Sciences and Mathematics,
Podgorica, Montenegro}
\email{antondj@ucg.ac.me}

\author{David Kalaj}
\address{University of Montenegro, Faculty of Natural Sciences and Mathematics,
Podgorica, Montenegro}
\email{davidk@ucg.ac.me}
\date{}

\begin{document}
\maketitle
\begin{abstract}
We prove that every sense-preserving harmonic $K$--quasiconformal homeomorphism
$f\colon D\to\Omega$ between Lyapunov domains (equivalently, bounded $C^{1,\alpha}$ domains)
in $\mathbb{R}^n$, $\alpha\in(0,1]$, is globally Lipschitz on $\overline D$.
The argument is based on a boundary iteration scheme: an initial H\"older modulus for the
boundary trace (coming from quasiconformality) is improved via the $C^{1,\alpha}$ graph
representation of $\partial\Omega$, yielding higher H\"older regularity for the normal component.
This boundary gain is converted into a near-boundary gradient bound for harmonic functions
through a basepoint boundary H\"older-to-gradient estimate obtained by flattening the boundary and using
local harmonic-measure bounds. Quasiconformality then propagates the resulting control from one
component to the full differential, and iteration gives boundedness of $|Df|$ up to the boundary.
Along the way we briefly survey several standard tools from the theory of quasiconformal harmonic
mappings (QCH), including boundary H\"older continuity, distortion of derivatives, and
boundary-to-interior propagation principles that enter the iteration.
\end{abstract}
\section{Quasiconformal harmonic mappings: an introduction}

A \emph{harmonic mapping} is a function $f:D\subset \R^n\to \R^n$ whose components are harmonic
(equivalently, $\Delta f=0$ in the distributional sense).

A \emph{$K$--quasiconformal mapping} is a
sense-preserving homeomorphism $f\in W^{1,n}_{\mathrm{loc}}(D,\R^n)$ such that there exists
$K\ge 1$ with
\[
|Df(x)|^{\,n}\le K\, J_f(x)\qquad \text{for a.e.\ }x\in D,
\]
where $Df(x)$ is the (a.e.\ defined) differential, $|Df(x)|$ is its operator norm, and
$J_f(x)=\det Df(x)$ (in particular $J_f>0$ a.e.).

A \emph{quasiconformal harmonic mapping} (often abbreviated \emph{HQC}) is a mapping that is
both harmonic and quasiconformal.
\\

The modern study of HQC mappings is often traced back to Martio's pioneering work
on harmonic quasiconformal self-maps of the disk \cite{Martio1969}.
A central theme is \emph{metric regularity}: under geometric or analytic hypotheses,
HQC homeomorphisms tend to be Lipschitz or even bi-Lipschitz (sometimes with respect to
Euclidean, hyperbolic, or quasihyperbolic-type metrics). In the planar case, sharp
boundary-to-interior principles and explicit distortion bounds can be obtained by combining
harmonic analysis tools (Poisson kernels, Hilbert transform, harmonic measure) with the
geometric theory of quasiconformal maps.

A prominent line of results concerns \emph{bi-Lipschitz characterizations} and boundary behavior.
For instance, Pavlovi\'{c} established an equivalence between quasiconformality of a harmonic
self-homeomorphism of $\D$ and Euclidean bi-Lipschitz behavior (with boundary criteria in terms
of the Hilbert transform) \cite{Pavlovic2002}. Partyka and Sakan developed explicit
bi-Lipschitz constants for harmonic quasiconformal self-maps of $\D$ \cite{PartykaSakan2007},
and later Partyka--Sakan--Zhu studied further structural constraints on HQC maps via conditions
on the analytic part \cite{PartykaSakanZhu2018}. Mateljevi\'{c} and Vuorinen investigated HQC
maps as quasi-isometries and Lipschitz mappings with respect to quasihyperbolic-type metrics
\cite{MateljevicVuorinen2010}. In higher dimensions, issues related to the possible vanishing
of the Jacobian complicate the picture; work of Bo\v{z}in and Mateljevi\'{c} illustrates this
via Jacobian bounds and co-Lipschitz phenomena for harmonic $K$-quasiconformal maps
\cite{BozinMateljevic2015}. Contributions of Arsenovi\'{c}, Koji\'{c}, Mateljevi\'{c}, Astala and
Todor\v{c}evi\'{c} form another important part of the literature, including
Lipschitz continuity of harmonic quasiregular maps and metric bi-Lipschitz properties
\cite{ArsenovicKojicMateljevic2008,Manojlovic2009,Todorcevic2019,AstalaManojlovic2015}.

Within this circle of ideas, a particularly influential body of work is due to
Kalaj and collaborators. One set of results treats \emph{boundary regularity and Lipschitz
continuity} for harmonic quasiconformal maps between planar domains with various smoothness
assumptions on the boundary (e.g.\ $C^{1,\alpha}$ or Dini smooth Jordan curves),
often yielding global Lipschitz (or bi-Lipschitz) estimates
\cite{Kalaj2008MZ,Kalaj2017Dini,Kalaj2022RMI}.
Another direction studies \emph{HQC solutions of elliptic equations} beyond $\Delta f=0$,
for example quasiconformal self-maps of $\D$ satisfying Poisson-type equations $\Delta f=g$
(and related Dirichlet data), establishing bi-Lipschitz bounds under analytic control of the
Laplacian \cite{KalajPavlovic2011,KalajSaksman2019,KalajZlaticanin2019}.
Kalaj also developed sharp inequalities for harmonic mappings and quantitative harmonic-analysis
estimates, such as Riesz-type inequalities \cite{Kalaj2019Riesz} and Heinz--Schwarz type bounds
in higher dimensions \cite{Kalaj2016HeinzSchwarz}, and explored links between HQC maps,
harmonic measure, and Muckenhoupt weights (yielding, among other things, an HQC version of
Lindel\"of-type boundary regularity) \cite{Kalaj2015Muckenhoupt}.

Historically, the broad theory of harmonic mappings (including univalence criteria via the
shear construction) provides essential background and motivation for many HQC questions;
a classical reference is \cite{ClunieSheilSmall1984}. For a modern perspective on the
interaction of quasiconformal mappings with elliptic PDE (including the Beltrami equation and
regularity theory), one may consult general sources such as \cite{AstalaIwaniecMartin2009},
which also contextualize why ``elliptic operator + boundary value problem'' viewpoints are so
effective in this area.

\medskip

\paragraph{\textbf{Organization of the paper.}}
In this paper we improve the result of \cite{GjokajKalaj2022} by showing that the same conclusion holds
under the sole assumption that the source domain has $C^{1,\alpha}$ boundary.
Section~\ref{sec:lipschitz} contains the analytic core of the argument.
We establish a basepoint boundary H\"older principle near a $C^{1,\alpha}$ boundary point $x_0$:
assuming a $C^{0,\mu}$ control of the trace on $\Gamma=\partial D\cap B(x_0,r_0)$,
\[
|u(\xi)-u(x_0)|\le M|\xi-x_0|^\mu,\qquad \xi\in\Gamma,
\]
we obtain quantitative interior control along the inward normal ray
$x=x_0+t\nu(x_0)$, $0<t<cr_0$. For $0<\mu<1$, Lemma~\ref{lem:anchored_holder_to_grad_normal} yields
\[
\sup_{y\in D\cap B(x,t/2)}|u(y)-u(x_0)|
\lesssim \Bigl(M+\frac{A}{r_0^\mu}\Bigr)t^\mu,
\qquad A:=\|u\|_{L^\infty(D\cap B(x_0,r_0))},
\]
and, in particular,
\[
|u(x)-u(x_0)|\lesssim \Bigl(M+\frac{A}{r_0^\mu}\Bigr)t^\mu,
\qquad
|\nabla u(x)|\lesssim \Bigl(M+\frac{A}{r_0^\mu}\Bigr)t^{\mu-1}.
\]
In the case $\mu>1$, the corresponding bounds are recorded in Lemma~\ref{lema2}:
\[
|u(x)-u(x_0)|\lesssim \Bigl(M\,r_0^{\mu-1}+\frac{A}{r_0}\Bigr)t,
\qquad
|\nabla u(x)|\lesssim \Bigl(M\,r_0^{\mu-1}+\frac{A}{r_0}\Bigr).
\]

Finally, combining these tools with a boundary exponent-improvement/iteration scheme, in Section~\ref{section4},
we obtain Lipschitz regularity up to the boundary for harmonic quasiconformal homeomorphisms
between bounded $C^{1,\alpha}$ domains (Theorem~\ref{thm:lipschitz_C1a}).
A brief remark on the planar case is included at the end of the section.

\medskip

\paragraph{\textbf{Further reading.}}
For background on harmonic quasiconformal mappings, boundary regularity, and related
Lipschitz/co-Lipschitz phenomena, we refer to
\cite{Martio1969,Pavlovic2002,PartykaSakan2007,PartykaSakanZhu2018,MateljevicVuorinen2010,
Manojlovic2009,ArsenovicKojicMateljevic2008,BozinMateljevic2015,Todorcevic2019,
ClunieSheilSmall1984,AstalaIwaniecMartin2009,
Kalaj2003Convex,Kalaj2004Convex,KalajPavlovic2005HalfPlane,KalajMateljevic2006Inner,
Kalaj2008MZ,Kalaj2008Ball,KalajPavlovic2011,Kalaj2011Distance,Kalaj2011Kneser,
KalajMateljevic2011PA,KalajVuorinen2012,Kalaj2013Gradient,KalajPonnusamyVuorinen2014,
Kalaj2015Muckenhoupt,Kalaj2016HeinzSchwarz,Kalaj2017Dini,KalajSaksman2019, Gjokaj, 
KalajZlaticanin2019,Kalaj2019Riesz,Kalaj2022RMI},
and the references therein.
\section{New results}
The authors  proved in \cite{GjokajKalaj2022} that if $f:\mathbf{B}^n\to\Omega\subset\mathbf{R}^n$
is a harmonic $K$--quasiconformal mapping and $\partial\Omega\in C^{1,\alpha}$,
then $f$ is Lipschitz continuous in $\mathbf{B}^n$.

In this work we improve that result by allowing an arbitrary $C^{1,\alpha}$ source domain $D$
(in place of the unit ball), while still assuming $C^{1,\alpha}$ regularity of the target boundary.

\begin{theorem}[Lipschitz regularity for harmonic quasiconformal maps]\label{thm:lipschitz_C1a}
Let $f:D\to\Omega$ be a $K$--quasiconformal harmonic mapping.
If $\partial D$ and $\partial\Omega$ are $C^{1,\alpha}$ with $\alpha\in(0,1)$, then $f$ is Lipschitz
on $\overline D$; equivalently, there exists $L>0$ such that
\[
|f(x)-f(y)|\le L\,|x-y|,\qquad \text{for all }x,y\in \overline D.
\]
\end{theorem}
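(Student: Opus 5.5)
We follow the iteration scheme announced in the introduction. The starting point is the classical global H\"older continuity of quasiconformal maps between $C^{1,\alpha}$ (in particular quasi-balls/uniform) domains. Since $D$ and $\Omega$ are bounded Lipschitz domains, $f$ extends to a homeomorphism $\overline D\to\overline\Omega$ which is H\"older continuous with some exponent $\mu_0=\mu_0(K,n,D,\Omega)\in(0,1)$:
\[
|f(x)-f(y)|\le C_0|x-y|^{\mu_0}.
\]
Fix a boundary point $x_0\in\partial D$ and put $y_0=f(x_0)\in\partial\Omega$. Near $y_0$ we write $\partial\Omega$ as a $C^{1,\alpha}$ graph over its tangent plane, with unit normal $N=N(y_0)$. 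The scalar function $u(x)=\langle f(x)-y_0,N\rangle$ is harmonic in $D$, and for $\xi\in\partial D$ close to $x_0$ the point $f(\xi)$ lies on the graph. Hence
\[
|u(\xi)-u(x_0)|\le C|f(\xi)-y_0|^{1+\alpha}\le C C_0^{1+\alpha}|\xi-x_0|^{\mu_0(1+\alpha)}.
\]
So the normal component has boundary exponent $\mu_1=(1+\alpha)\mu_0$ at $x_0$. All constants are uniform in $x_0$, because the $C^{1,\alpha}$ character of both boundaries is uniform.

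\textbf{Step 1: from the boundary gain to a gradient bound.} If $\mu_1<1$, Lemma~\ref{lem:anchored_holder_to_grad_normal} applies with $u$ and a fixed $r_0$, and $A\le\operatorname{diam}\Omega$. It gives
\[
|\nabla u(x)|\lesssim t^{\mu_1-1}\quad\text{at } x=x_0+t\nu(x_0).
\]
We need this at an arbitrary point $x$ near $\partial D$. For that we take $x_0$ to be the nearest boundary point of $x$, so that $x$ lies on the inward normal at $x_0$ with $t=\dist(x,\partial D)$. The cost is that $N$ then depends on $x$; this is harmless because the bound is pointwise and the constants are uniform.

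\textbf{Step 2: from one component to the full differential.} The key observation is that $\nabla u(x)=Df(x)^{T}N$, so $|\nabla u(x)|\ge \ell(Df(x))$, the minimal stretching of $Df(x)$. Quasiconformality gives $|Df(x)|\le K\,\ell(Df(x))$ (with the appropriate power of $K$ in the operator-norm definition; harmonicity makes $f$ smooth, and the inequality holds pointwise wherever $J_f\neq0$ and by continuity elsewhere). Therefore
\[
|Df(x)|\lesssim \dist(x,\partial D)^{\mu_1-1}.
\]
By the standard Hardy--Littlewood type integration along curves in the uniform domain $D$, this yields $f\in C^{0,\mu_1}(\overline D)$. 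We then iterate: $\mu_{k+1}=(1+\alpha)\mu_k$. After finitely many steps some $\mu_k$ exceeds $1$; if $\mu_k=1$ exactly, we shrink $\mu_0$ slightly to avoid it. At that stage we have
\[
|u(\xi)-u(x_0)|\le M|\xi-x_0|^{\mu}\quad\text{with } \mu>1,
\]
uniformly in $x_0$. Lemma~\ref{lema2} then gives a uniform bound on $|\nabla u|$ near the boundary, and Step 2 turns it into a bound on $|Df|$. Interior boundedness of $|Df|$ is trivial, so $Df$ is bounded on $D$, and $f$ is Lipschitz on $\overline D$ because $D$ is a $C^{1,\alpha}$ (hence quasiconvex) domain.

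\textbf{Main obstacle.} The delicate point is Step 2: the uniform choice of the normal direction $N$ at the image point, which must be tied to the nearest boundary point of $x$. This requires the graph estimate to hold on a boundary patch of radius $r_0$ independent of the base point. It also requires checking that the trace estimate is really needed only at the base point $x_0$, as in the basepoint formulation of the lemmas, and not uniformly over the patch. If the anchored lemma requires control in a full neighborhood, we would instead apply the graph estimate with base point $\xi$ varying in the patch. Replacing $N(y_0)$ by $N(f(\xi))$ costs an error of size
\[
|N(f(\xi))-N(y_0)|\,|f(\xi)-f(\xi')|\lesssim |\xi-x_0|^{\alpha\mu_k}|\xi-\xi'|^{\mu_k},
\]
which still produces the improved exponent $(1+\alpha)\mu_k$.
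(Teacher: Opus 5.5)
Your proposal is correct and follows essentially the same route as the paper: global H\"older continuity from quasiconformality (Gehring--Martio), the $(1+\alpha)$-gain for the normal component at the image point via the $C^{1,\alpha}$ graph of $\partial\Omega$, Lemma~\ref{lem:anchored_holder_to_grad_normal} at the nearest boundary point, the inequality $|Df|\le K\,\ell(Df)\le K|Df^{T}N|=K|\nabla u|$, integration to upgrade the boundary exponent, iteration avoiding the exponent $1$, and Lemma~\ref{lema2} plus quasiconvexity to conclude. The contingency in your last paragraph is unnecessary, because both lemmas use only the basepoint condition at $x_0$, which is exactly what the graph estimate provides; as stated it would also fail, since the error term $|\xi-x_0|^{\alpha\mu_k}|\xi-\xi'|^{\mu_k}$ does not give uniform $(1+\alpha)\mu_k$-H\"older control on a whole patch.
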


\section{Auxiliary results}\label{sec:lipschitz}

\noindent As indicated above, we now prove a localized boundary-to-interior estimate showing that a basepoint
$\mu$-H\"older control of the boundary values at $x_0$ propagates into $\mu$-H\"older control in the interior and
yields the corresponding gradient bound $|\nabla u(x)|\lesssim t^{\mu-1}$ along the inward normal through $x_0$,
where $t$ denotes the normal distance from $x_0$.

\begin{lemma}[Basepoint boundary H\"older at $x_0$ $\Rightarrow$ interior H\"older and gradient control along the normal]
\label{lem:anchored_holder_to_grad_normal}
Let $D\subset\mathbf R^n$ be a domain and let $x_0\in\partial D$.
Assume that $\partial D$ is $C^{1,\alpha}$ in a neighborhood of $x_0$ for some $\alpha\in(0,1)$.
Fix $0<\mu< 1$.

Then there exist $r_0>0$, $c\in(0,1)$ and $C>0$ (depending only on the local $C^{1,\alpha}$ character
of $\partial D$ near $x_0$ and on $\mu$) such that the following holds.

Let $u$ be harmonic in $\Omega_{x_0}:=D\cap B(x_0,r_0)$ and continuous on $\overline{\Omega_{x_0}}$.
Assume that on the boundary patch $\Gamma:=\partial D\cap B(x_0,r_0)$,
\begin{equation}\label{eq:anchored_holder_bd_combined}
|u(\xi)-u(x_0)|\le M|\xi-x_0|^\mu,\qquad \xi\in\Gamma,
\end{equation}
and set $A:=\|u\|_{L^\infty(\Omega_{x_0})}$.
Let $\nu(x_0)$ be the inward unit normal to $\partial D$ at $x_0$.
\\
Then for every
\[
x=x_0+t\,\nu(x_0)\in D,\qquad 0<t<cr_0,
\]
we have the ``tent'' estimate
\begin{equation}\label{eq:tent_holder_combined}
\sup_{y\in D\cap B(x,t/2)} |u(y)-u(x_0)|
\;\le\; C\Big(M+\frac{A}{r_0^\mu}\Big)\,t^\mu.
\end{equation}
In particular,
\begin{equation}\label{eq:holder_normal_combined}
|u(x)-u(x_0)|
\;\le\; C\Big(M+\frac{A}{r_0^\mu}\Big)\,t^\mu,
\end{equation}
and the full gradient satisfies
\begin{equation}\label{eq:grad_normal_combined}
|\nabla u(x)|
\;\le\; C\Big(M+\frac{A}{r_0^\mu}\Big)\,t^{\mu-1}.
\end{equation}
Equivalently, writing $\delta_D(x):=\dist(x,\partial D)$, and noting that
$\delta_D(x)=t$ along this inward normal ray for $t<cr_0$, we obtain
\[
|u(x)-u(x_0)|\le C\Big(M+\frac{A}{r_0^\mu}\Big)\,\delta_D(x)^\mu,
\qquad
|\nabla u(x)|\le C\Big(M+\frac{A}{r_0^\mu}\Big)\,\delta_D(x)^{\mu-1}.
\]
\end{lemma}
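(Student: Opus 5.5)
We prove the tent estimate \eqref{eq:tent_holder_combined} by a dyadic decay argument for the oscillation of $v:=u-u(x_0)$ on boundary-adapted neighborhoods of $x_0$. The remaining estimates then follow: \eqref{eq:holder_normal_combined} is the special case $y=x$, and \eqref{eq:grad_normal_combined} comes from the interior gradient estimate for harmonic functions.

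\emph{Geometric setup.} First we fix $r_0$ and $c$. Choose $r_0$ so small that $\partial D\cap B(x_0,4r_0)$ is, in rotated coordinates with $x_0=0$ and $\nu(x_0)=e_n$, the graph $x_n=\phi(x')$ with $\phi(0)=0$, $\nabla\phi(0)=0$ and $\|\nabla\phi\|_{C^{0,\alpha}}\le L$. We may also take $r_0$ small enough that $|\phi(x')|\le \frac1{100}|x'|$ there. The interior ball condition holds, since $C^{1,\alpha}$ domains satisfy interior ball conditions of a uniform radius $\rho$. Taking $c$ small relative to $\rho/r_0$ then gives $\delta_D(x_0+te_n)=t$ for $t<cr_0$, and also $B(x,t/2)\subset B(x_0,2t)$ with $\dist(B(x,t/2),\partial D)\ge t/4$ (after shrinking $c$). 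So $D\cap B(x,t/2)=B(x,t/2)$, and the interior gradient estimate $|\nabla v(x)|\le C t^{-1}\sup_{B(x,t/2)}|v|$ gives \eqref{eq:grad_normal_combined} once \eqref{eq:tent_holder_combined} is known.

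\emph{Key step: oscillation decay with a boundary term.} Set $\omega(r):=\sup_{D\cap B(x_0,r)}|v|$ for $r\le r_0$. The claim is that there are $\theta\in(0,1)$ and $C_0$, depending only on $n$ and $L$, such that
\[
\omega(r/2)\le \theta\,\omega(r)+C_0\sup_{\Gamma\cap B(x_0,r)}|v| \le \theta\,\omega(r)+C_0 M r^\mu .
\]
To prove it, split $v=v_1+v_2$ on $U_r:=D\cap B(x_0,r)$. Here $v_1$ is harmonic with boundary values $v$ on $\partial D\cap B(x_0,r)$ and $0$ on $D\cap\partial B(x_0,r)$; then $|v_1|\le Mr^\mu$ by the maximum principle. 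The other piece $v_2$ is harmonic with boundary values $0$ on the flat-ish part and bounded by $\omega(r)+Mr^\mu$ on the spherical part. By the maximum principle, $|v_2|\le(\omega(r)+Mr^\mu)\,h$, where $h=\omega^{\,\cdot}(D\cap\partial B(x_0,r);U_r)$ is the harmonic measure of the spherical part.

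The \textbf{main obstacle} is the uniform bound $h\le\theta<1$ on $D\cap B(x_0,r/2)$, with $\theta$ independent of $r$. We would get it by comparison. Because $|\phi(x')|\le|x'|/100$, the set $U_r$ is contained in the region $\{x_n>-|x'|/100\}\cap B(x_0,r)$. That region is a scaled copy of a fixed Lipschitz (cone) domain, so by scale invariance its harmonic measure of the spherical part is bounded by a fixed $\theta<1$ on the half ball. By monotonicity of harmonic measure in the domain, the same bound holds for $U_r$. (Flattening the boundary is an alternative route, but the cone comparison avoids it.)

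\emph{Iteration.} Iterating $\omega(2^{-k}r_0)\le\theta\,\omega(2^{-k+1}r_0)+C_0M(2^{-k+1}r_0)^\mu$ gives
\[
\omega(2^{-k}r_0)\le \theta^k\,2A + C_0 M r_0^\mu\sum_{j<k}\theta^{j}2^{-(k-1-j)\mu}.
\]
The argument works provided $\theta<2^{-\mu}$. If $\theta\ge 2^{-\mu}$, first replace the halving by a factor $\lambda$ with $\lambda^{\mu}>\theta_\lambda$; this is possible because the harmonic measure of the outer sphere at scale $\lambda r$ is $O(\lambda^{\beta})$ for some cone exponent $\beta$, which exceeds $\mu$ once the cone is nearly a half-space, i.e.\ once $r_0$ is small, since then $\beta\to1>\mu$. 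This is where the dependence on $\mu<1$ and on the $C^{1,\alpha}$ flatness enters. With that choice, $\omega(s)\le C(M+A r_0^{-\mu})s^\mu$ for $s\le r_0$. Taking $s=2t$ and using $D\cap B(x,t/2)\subset B(x_0,2t)$ yields \eqref{eq:tent_holder_combined}, and the remaining estimates follow as explained above.
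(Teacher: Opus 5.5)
Your argument is correct, and it takes a genuinely different route from the paper.

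The paper works at a single scale. On $\Omega_0=D\cap B(0,1)$ it writes $u(y)-u(0)=\int_{\partial\Omega_0}(u-u(0))\,d\omega^y$. It controls the two pieces by the \emph{linear} decay of harmonic measure near a $C^{1,\alpha}$ boundary, namely $\omega^y(\Sigma)\lesssim t$ and $\omega^y(\Gamma\setminus\overline{B}(0,s))\lesssim\delta_D(y)/s$, both taken from the Gilbarg--Trudinger boundary gradient estimate. It then integrates via the layer-cake formula.

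You instead prove a multiscale decay $\omega(\lambda r)\le\theta_\lambda\,\omega(r)+Mr^\mu$. This uses only the maximum principle, monotonicity of harmonic measure, and comparison with the thin cone $\{x_n>-\epsilon|x'|\}$, whose homogeneity exponent $\beta(\epsilon)<1$ tends to $1$ as $\epsilon\to0$. Choosing $\epsilon=\epsilon(\mu)$ with $\beta>\mu$, and then $\lambda$ small so that $\theta_\lambda\le C\lambda^{\beta}<\lambda^\mu$, closes the iteration. For the bound $\theta_\lambda\le C\lambda^\beta$ it is convenient to compare with $|y|^{\beta'}\psi'(y/|y|)$ on a slightly wider cone, so that the eigenfunction is bounded below on the relevant spherical cap.

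What your route buys is that it needs no boundary Schauder or Lipschitz theory. It works whenever $\partial D$ is trapped between flat cones at $x_0$ at all small scales, e.g.\ for merely $C^1$ boundaries. The cost is twofold. First, $r_0$ depends on $\mu$ through $\epsilon(\mu)$, which the statement permits. Second, every cone strictly containing a half-space has exponent $<1$, so the argument as written cannot deliver the linear decay that the paper reuses verbatim to get Lemma~\ref{lema2} for $\mu>1$. That is the case the main theorem ultimately needs.

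One correction in your setup: $C^{1,\alpha}$ domains with $\alpha<1$ need not satisfy a uniform interior ball condition; that property is essentially $C^{1,1}$. For $D=\{x_n>|x'|^{1+\alpha}\}$ and $x_0=0$ one has $\delta_D(te_n)<t$ for every $t>0$. So the exact identity $\delta_D(x)=t$, which the statement and the paper's proof also assert, can fail.

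Nothing essential breaks. Your inclusion $|\phi(x')|\le\epsilon|x'|$ already gives $t/\sqrt{1+\epsilon^2}\le\delta_D(x)\le t$. This yields $B(x,t/2)\subset\Omega_{x_0}$ at distance $\gtrsim t$ from $\partial D$, which is what the interior gradient estimate needs. It also yields the $\delta_D$-form of the estimates, with $\delta_D(x)\simeq t$ in place of equality.
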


\begin{proof}
Translate/rotate so that $x_0=0$ and $\nu(x_0)=e_n$. Rescale by $x=r_0 z$; it suffices to prove the
claim for $r_0=1$ and then scale back.
Thus set
\[
\Omega_0:=D\cap B(0,1),\qquad \Gamma:=\partial D\cap B(0,1),\qquad
\Sigma:=\partial\Omega_0\setminus\Gamma=\partial B(0,1)\cap \overline{D},
\]
and $A:=\|u\|_{L^\infty(\Omega_0)}$.
Choose $c\in(0,1)$ so small that $B(0,2c)\cap\Sigma=\varnothing$ and that the segment
$\{te_n:0<t<2c\}\subset D$ has $0$ as unique nearest boundary point (possible by $C^1$ regularity).

Fix $t\in(0,c)$ and set $x:=te_n$. Then $\delta_D(x)=t$.
Moreover, for every $y\in B(x,t/2)\cap D$ one has $|y|\lesssim t$ and $\delta_D(y)\simeq t$
(with constants depending only on the local $C^1$ geometry).

\begin{figure}[h]
    \centering
    \includegraphics[scale=0.15]{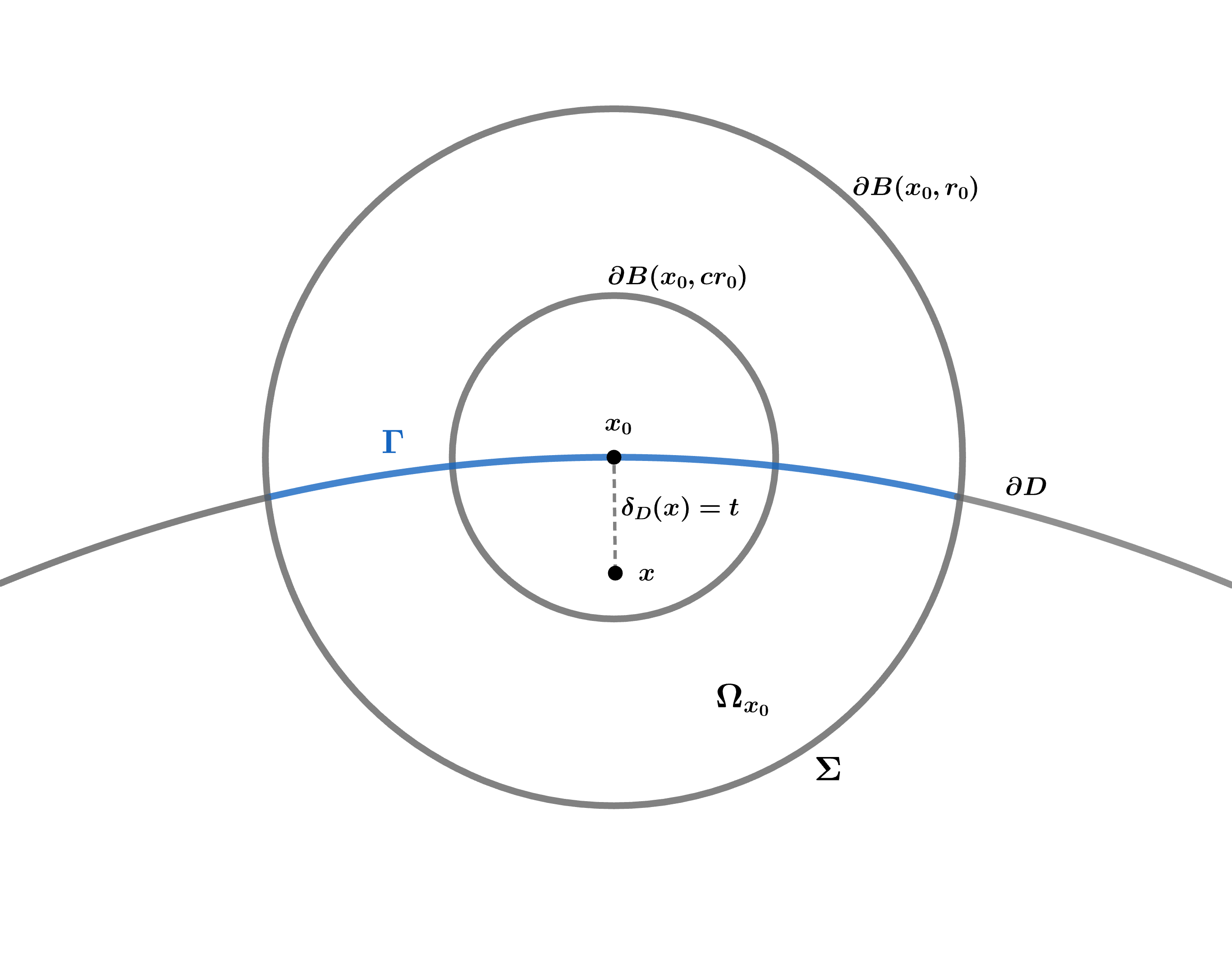}
\caption{\textbf{Local geometry near a $C^{1,\alpha}$ boundary patch.}
The boundary of $\Omega_{x_0}=D\cap B(x_0,r_0)$ is decomposed as $\partial\Omega_{x_0}=\Gamma\cup\Sigma$,
where $\Gamma=\partial D\cap B(x_0,r_0)$ and $\Sigma=\partial B(x_0,r_0)\cap \overline{D}.$}
\label{22}
\end{figure}\ \\[0.2cm]
Let $\omega^y$ be harmonic measure for $\Omega_0$ with pole at $y$. Since $u$ is harmonic in $\Omega_0$
and continuous on $\partial\Omega_0$,
\[
u(y)-u(0)=\int_{\partial\Omega_0}(u(\zeta)-u(0))\,d\omega^y(\zeta).
\]
Using \eqref{eq:anchored_holder_bd_combined} on $\Gamma$ and $|u(\zeta)-u(0)|\le 2A$ on $\Sigma$,
\begin{equation}\label{eq:basic_y_combined}
|u(y)-u(0)|
\le
M\int_{\Gamma}|\zeta|^\mu\,d\omega^y(\zeta)
+2A\,\omega^y(\Sigma).
\end{equation}

\emph{$\Sigma$ contribution.}
Let $w$ be harmonic in $\Omega_0$ with $w=0$ on $\Gamma$ and $w=1$ on $\Sigma$.
Then $w(y)=\omega^y(\Sigma)$.
Since $B(0,2c)\cap\Sigma=\varnothing$, we have $w\equiv0$ on $\partial\Omega_0\cap B(0,2c)$.
The boundary Lipschitz estimate in $C^{1,\alpha}$ domains (after flattening/scaling, see
\cite[Corollary~8.36]{GT}) gives
\[
\|\nabla w\|_{L^\infty(\Omega_0\cap B(0,c))}\le C,
\]
hence $w(y)\le C\,\delta_D(y)\lesssim t$, i.e.
\begin{equation}\label{eq:leak_y_combined}
\omega^y(\Sigma)\le Ct.
\end{equation}

\emph{$\Gamma$ contribution.}
For $s>0$ define $F_y(s):=\omega^y(\Gamma\setminus \overline{B}(0,s))$.
Since $\Gamma\subset B(0,1)$, $F_y(s)=0$ for $s\ge1$.
By layer-cake,
\begin{equation}\label{eq:layercake_y_combined}
\int_{\Gamma}|\zeta|^\mu\,d\omega^y(\zeta)=\mu\int_0^1 s^{\mu-1}F_y(s)\,ds.
\end{equation}
For $s\in(\delta_D(y),1]$ one has the decay
\begin{equation}\label{eq:Fy_decay_combined}
F_y(s)\le C\,\frac{\delta_D(y)}{s},
\end{equation}
proved by comparing $F_y(s)$ with the harmonic function that vanishes on $\Gamma\cap B(0,s)$ and
equals $1$ on $\partial\Omega_0\setminus(\Gamma\cap B(0,s))$, then applying a boundary Lipschitz estimate
after flattening/scaling (again of the type \cite[Corollary~8.36]{GT}).
Splitting \eqref{eq:layercake_y_combined} at $s=\delta_D(y)$ and using $0<\mu<1$ yields
\[
\int_{\Gamma}|\zeta|^\mu\,d\omega^y(\zeta)
\le \mu\int_0^{\delta_D(y)} s^{\mu-1}\,ds
+ C\mu\,\delta_D(y)\int_{\delta_D(y)}^1 s^{\mu-2}\,ds
\le C\,\delta_D(y)^\mu
\lesssim t^\mu.
\]
Together with \eqref{eq:leak_y_combined} and \eqref{eq:basic_y_combined}, this gives
\[
|u(y)-u(0)|\le C(Mt^\mu+At)\le C(M+A)\,t^\mu,
\]
since $0<t<1$ and $0<\mu<1$ imply $t\le t^\mu$. Taking the supremum over $y\in B(x,t/2)\cap D$
yields \eqref{eq:tent_holder_combined} (for $r_0=1$). In particular, taking $y=x$ gives
\eqref{eq:holder_normal_combined} (for $r_0=1$).
\\ \
Finally, apply the interior gradient estimate at $x$:
\[
|\nabla u(x)|\le \frac{C}{t}\,\sup_{B(x,t/2)\cap D}|u-u(0)|
\le C(M+A)\,t^{\mu-1},
\]
which is \eqref{eq:grad_normal_combined} for $r_0=1$.
\\ \
Rescaling back to general $r_0$ gives the factor $M+\frac{A}{r_0^\mu}$ in
\eqref{eq:tent_holder_combined}--\eqref{eq:grad_normal_combined}, and completes the proof.
\end{proof}

\medskip
\noindent \\
We do not treat the endpoint case $\mu=1$ in Lemma~\ref{lem:anchored_holder_to_grad_normal}, since the
harmonic-measure estimate of $\int_{\Gamma}|\zeta|^\mu\,d\omega^y(\zeta)$ then produces the logarithmic factor
\[
\int_{\delta_D(y)}^{1}\frac{ds}{s}=\log\!\frac{1}{\delta_D(y)},
\]
and hence only yields logarithmic control of the gradient. We therefore avoid the endpoint $\mu=1$ in the
iteration. Once the exponent satisfies $\mu>1$, the logarithmic term disappears and the same computation yields
a uniform gradient bound. For convenience, we record the corresponding estimate in the following lemma.

\begin{lemma}[Addendum to Lemma~\ref{lem:anchored_holder_to_grad_normal}: the case $\mu>1$]
\label{lema2}
Assume the hypotheses and notation of Lemma~\ref{lem:anchored_holder_to_grad_normal}, except that
\eqref{eq:anchored_holder_bd_combined} holds for some exponent $\mu>1$.
Then there exist $r_0>0$, $c\in(0,1)$ and $C>0$ (depending only on the local $C^{1,\alpha}$ character
of $\partial D$ near $x_0$ and on $\mu$) such that for every
\[
x=x_0+t\,\nu(x_0)\in D,\qquad 0<t<cr_0,
\]
we have
\begin{equation}\label{eq:holder_mu_gt_1}
|u(x)-u(x_0)|
\;\le\; C\Big(M\,r_0^{\mu-1}+\frac{A}{r_0}\Big)\,t,
\end{equation}
and
\begin{equation}\label{eq:grad_mu_gt_1}
|\nabla u(x)|
\;\le\; C\Big(M\,r_0^{\mu-1}+\frac{A}{r_0}\Big).
\end{equation}
Equivalently, along this inward normal ray (where $\delta_D(x)=t$ for $t<cr_0$),
\[
|u(x)-u(x_0)|
\le C\Big(M\,r_0^{\mu-1}+\frac{A}{r_0}\Big)\,\delta_D(x),
\qquad
|\nabla u(x)|
\le C\Big(M\,r_0^{\mu-1}+\frac{A}{r_0}\Big).
\]
\end{lemma}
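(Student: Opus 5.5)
We follow the proof of Lemma~\ref{lem:anchored_holder_to_grad_normal} line by line and change only the exponent bookkeeping. After translating, rotating and rescaling $x=r_0z$, we may take $x_0=0$, $\nu(x_0)=e_n$ and $r_0=1$. Under this rescaling the boundary hypothesis becomes $|u(\xi)-u(0)|\le M r_0^{\mu}|\xi|^\mu$, and $A$ is unchanged. In the normalized setting we will prove
\[
\sup_{y\in D\cap B(x,t/2)}|u(y)-u(0)|\le C(M+A)\,t .
\]
The scaling factors then work out as follows. The oscillation bound becomes $C(Mr_0^\mu+A)(t/r_0)=C(Mr_0^{\mu-1}+A/r_0)\,t$. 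The gradient picks up a factor $1/r_0$ from the chain rule, which gives $C(Mr_0^{\mu-1}+A/r_0)$. This is exactly \eqref{eq:holder_mu_gt_1}--\eqref{eq:grad_mu_gt_1}.

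In the normalized setting we choose $c$ as before and fix $y\in D\cap B(x,t/2)$, where $x=te_n$, so that $|y|\lesssim t$ and $\delta_D(y)\simeq t$. The harmonic-measure representation gives \eqref{eq:basic_y_combined} verbatim. The $\Sigma$ term is handled by \eqref{eq:leak_y_combined}: $2A\,\omega^y(\Sigma)\le CAt$. This term is already linear in $t$; in the case $\mu<1$ it had to be weakened to $t^\mu$, and here no weakening is needed.

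For the $\Gamma$ term we use the layer-cake formula \eqref{eq:layercake_y_combined} together with the decay \eqref{eq:Fy_decay_combined}, $F_y(s)\le C\delta_D(y)/s$ for $s>\delta_D(y)$, and the trivial bound $F_y\le 1$ for $s\le\delta_D(y)$. This gives
\[
\int_\Gamma|\zeta|^\mu\,d\omega^y\le \mu\int_0^{\delta_D(y)}s^{\mu-1}ds+C\mu\,\delta_D(y)\int_{\delta_D(y)}^1 s^{\mu-2}ds
\le \delta_D(y)^\mu+\frac{C\mu}{\mu-1}\,\delta_D(y).
\]
The key point is that $\mu>1$ makes $s^{\mu-2}$ integrable at $0$, so the second integral is at most $1/(\mu-1)$. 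There is no logarithm and no negative power of $\delta_D(y)$. Since $\delta_D(y)\lesssim t<1$ and $\mu>1$, we have $\delta_D(y)^\mu\le\delta_D(y)\lesssim t$. The $\Gamma$ contribution is therefore at most $C_\mu Mt$, and altogether $|u(y)-u(0)|\le C(M+A)t$ uniformly on $D\cap B(x,t/2)$. Taking $y=x$ gives \eqref{eq:holder_mu_gt_1}.

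For the gradient we apply the interior gradient estimate on the ball $B(x,t/2)$. This ball lies in $D$ because $\delta_D(x)=t$. Applying the estimate to $u-u(0)$ gives
\[
|\nabla u(x)|\le \frac{C}{t}\sup_{B(x,t/2)}|u-u(0)|\le C(M+A).
\]
This is \eqref{eq:grad_mu_gt_1} in the normalized setting, and rescaling completes the proof.

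The only genuinely new step, and the one to watch, is the $\Gamma$ estimate. It requires that the decay \eqref{eq:Fy_decay_combined} holds uniformly for all $y$ in the tent with constants independent of $\mu$, and that the resulting constant $C_\mu\sim 1/(\mu-1)$ is allowed. It is, since $C$ may depend on $\mu$.
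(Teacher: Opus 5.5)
Your proposal is correct and follows the paper's proof essentially verbatim. It uses the same harmonic-measure representation, the same $\Sigma$ leakage bound $\omega^y(\Sigma)\le Ct$, and the same layer-cake split at $s=\delta_D(y)$, where $\mu>1$ gives $\delta_D(y)\int_{\delta_D(y)}^1 s^{\mu-2}\,ds\le \delta_D(y)/(\mu-1)$. Your explicit rescaling bookkeeping ($M\mapsto Mr_0^{\mu}$, $A$ unchanged, $t\mapsto t/r_0$, and an extra factor $1/r_0$ for the gradient) correctly produces the factor $Mr_0^{\mu-1}+A/r_0$, a step the paper leaves implicit.
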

\begin{proof}[Proof]
The proof is identical to that of Lemma~\ref{lem:anchored_holder_to_grad_normal}, except for the estimate of
\[
\int_{\Gamma}|\zeta|^\mu\,d\omega^y(\zeta).
\]
Using the same notation and the layer--cake representation,
\[
\int_{\Gamma}|\zeta|^\mu\,d\omega^y(\zeta)=\mu\int_0^1 s^{\mu-1}F_y(s)\,ds,
\]
and the same decay estimate \eqref{eq:Fy_decay_combined} for $s\in(\delta_D(y),1]$,
we split at $s=\delta_D(y)$ to obtain
\begin{align*}
\int_{\Gamma}|\zeta|^\mu\,d\omega^y(\zeta)
&\le \mu\int_0^{\delta_D(y)} s^{\mu-1}\,ds
   + C\mu\,\delta_D(y)\int_{\delta_D(y)}^{1} s^{\mu-2}\,ds \\
&= \delta_D(y)^\mu
   + C\mu\,\delta_D(y)\,\frac{1-\delta_D(y)^{\mu-1}}{\mu-1}
\;\le\; C\,\delta_D(y),
\end{align*}
since $\mu>1$ and $\delta_D(y)\in(0,1)$. Substituting this bound in place of the corresponding estimate
in Lemma~\ref{lem:anchored_holder_to_grad_normal} yields the stated conclusions.
\end{proof}

\section{The proof of the main result}\label{section4}

\begin{proof}[Proof of Theorem~\ref{thm:lipschitz_C1a}]
We prove a uniform bound for $|Df|$ in a collar neighborhood of $\partial D$ and then combine it
with standard interior estimates.

\medskip
\noindent\textbf{0) Local charts and tubular neighborhoods.}
Fix $x_0\in\partial D$ and set $q_0:=f(x_0)\in\partial\Omega$,
using the continuous extension of $f$ to $\overline D$ provided in Step~1.

Since $\partial D$ is $C^{1,\alpha}$ near $x_0$, there exists a neighborhood $U\subset\R^n$ of $x_0$
and $r_D>0$ such that every point
\[
x\in \mathcal C_D:=\{x\in D\cap U:\delta_D(x)<r_D\}
\]
has a unique nearest boundary point $\pi(x)\in\partial D\cap U$ and admits the normal
representation
\begin{equation}\label{eq:tubular_D}
x=\pi(x)+\delta_D(x)\,\nu_D(\pi(x)),
\end{equation}
where $\nu_D(\xi)$ denotes the inward unit normal to $\partial D$ at $\xi$.

Similarly, since $\partial\Omega$ is $C^{1,\alpha}$ and compact, we may choose a neighborhood $V$
of $\partial\Omega$ and constants $r_\Omega>0$, $C_\Omega>0$ with the following property:
for every $q\in\partial\Omega$ there is a rigid motion $T_q(z)=R_q(z-q)$ (with $R_q$ orthogonal)
such that in the coordinates $y=T_q(z)$ the boundary $\partial\Omega$ near $q$ is a $C^{1,\alpha}$
graph
\[
\partial\Omega_q:=T_q(\partial\Omega)\cap B(0,r_\Omega)=\{(y',y_n): y_n=\Phi_q(y')\},
\]
where $\Phi_q\in C^{1,\alpha}$, $\Phi_q(0)=0$, $\nabla\Phi_q(0)=0$, and uniformly for all such $q$,
\begin{equation}\label{eq:Phi_uniform}
|\nabla\Phi_q(y')|\le C_\Omega |y'|^\alpha,
\qquad
|\Phi_q(y')|\le C_\Omega |y'|^{1+\alpha},
\qquad y'\in B(0,r_\Omega).
\end{equation}
(These follow from $\nabla\Phi_q(0)=0$ and uniform $C^{1,\alpha}$ control of $\partial\Omega$.)

\medskip
\noindent\textbf{1) Global H\"older control via Gehring--Martio.}
Since $D$ and $\Omega$ are bounded $C^{1,\alpha}$ domains,
they are bounded Lipschitz domains. It is a standard fact that every bounded Lipschitz domain is a (bounded)
uniform domain (see, e.g., \cite{BuckleyHerron,Vaisala} and the references therein). Thus both $D$ and $\Omega$ are uniform domains. Moreover, by Lemma~2.18 in \cite{GehringMartio}, every bounded uniform domain is a John domain, so in particular
$\Omega$ is a John domain. 

Therefore the hypotheses of Theorem~3.28 in \cite{GehringMartio} apply: since $f$ maps a uniform domain $D$
onto a John domain $\Omega$, there exists an exponent $\beta\in(0,1)$ and a constant $C>0$ such that
\begin{equation}\label{betaholder}
|f(x)-f(y)|\le C\,|x-y|^{\beta},\qquad x,y\in \overline{D}.
\end{equation}
Thus $f$ is globally $\beta$--H\"older continuous on $\overline{D}$.
Set $\beta_0:=\beta$. In particular, $f$ admits a continuous extension to $\overline D$ (still denoted by $f$).

\medskip
\noindent\textbf{2) One-step exponent improvement for a rotated normal component (at each boundary point).}
Fix $\xi\in\partial D\cap U$ and set $q:=f(\xi)\in\partial\Omega$.
Let $T_q$ be the rigid motion from Step~0 and define the rotated map
\[
f^{(q)}:=T_q\circ f,\qquad F^{(q)}:=f^{(q)}|_{\partial D\cap U}.
\]
Then $f^{(q)}$ is still harmonic and $K$--quasiconformal, and $F^{(q)}(\partial D\cap U)$ lies in
$\partial\Omega_q$ (after shrinking $U$ so that $f(D\cap U)\subset V$).
Writing $f^{(q)}=(\tilde f^{(q)}, f^{(q)}_n)$, the boundary identity is
\begin{equation}\label{eq:boundary_identity_q}
F^{(q)}_n(\eta)=\Phi_q(\tilde F^{(q)}(\eta)),\qquad \eta\in\partial D\cap U.
\end{equation}
Since $F^{(q)}(\xi)=0$, we have $\tilde F^{(q)}(\xi)=0$ and $F^{(q)}_n(\xi)=0$.
Combining \eqref{eq:boundary_identity_q} with \eqref{eq:Phi_uniform} gives, for $\eta\in\partial D\cap U$,
\[
|F^{(q)}_n(\eta)-F^{(q)}_n(\xi)|
=|\Phi_q(\tilde F^{(q)}(\eta))|
\le C_\Omega |\tilde F^{(q)}(\eta)|^{1+\alpha}.
\]
Because $T_q$ is an isometry, \eqref{betaholder} implies
$|\tilde F^{(q)}(\eta)|\le |F^{(q)}(\eta)|=|f(\eta)-f(\xi)|\le C_0|\eta-\xi|^{\beta_0}$.
Hence
\begin{equation}\label{eq:improved_normal_trace}
|F^{(q)}_n(\eta)-F^{(q)}_n(\xi)|
\le C_1 |\eta-\xi|^{\beta_1},
\qquad
\beta_1:=(1+\alpha)\beta_0,
\end{equation}
with $C_1:=C_\Omega C_0^{1+\alpha}$ (uniform in $\xi$).

\medskip
\noindent\textbf{3) Boundary gradient estimate for $f^{(q)}_n$ and passage to $|Df|$.}
Recall from Step~2 that \eqref{eq:improved_normal_trace} holds with exponent $\beta_1=(1+\alpha)\beta_0$.
Assume for the moment that $\beta_1<1$.
Apply Lemma~\ref{lem:anchored_holder_to_grad_normal} to the harmonic function $u=f^{(q)}_n$ at $x_0=\xi$
with exponent $\mu=\beta_1$.
\\
Then there exist
$r_1\in(0,r_D)$, $c\in(0,1)$, and $C_2>0$ (uniform in $\xi$) such that for
\[
x=\xi+t\,\nu_D(\xi)\in D,\qquad 0<t<c r_1,
\]
we have the gradient estimate
\begin{equation}\label{eq:grad_fnq}
|\nabla f^{(q)}_n(x)|\le C_2\, t^{\beta_1-1}.
\end{equation}
We now reduce this to a bound for $|Df|$. Let $A:=Df^{(q)}(x)$.
Since $A$ and $A^T$ have the same singular values, we have
\[
l(A)=\inf_{|b|=1}|A^Tb|\le |A^T e_n|=|\nabla f^{(q)}_n(x)|,
\]
where $l(A)$ denotes the smallest singular value and $e_n$ is the $n$-th coordinate vector. By quasiconformality,
\begin{equation}\label{eq:qc_distortion_again}
|Df^{(q)}(x)|\le H(K)\,l(Df^{(q)}(x)),
\end{equation}
hence
\begin{equation}\label{eq:Df_by_gradfnq_simple}
|Df^{(q)}(x)|\le H(K)\,|\nabla f^{(q)}_n(x)|.
\end{equation}

Since $T_q$ is a rigid motion, $Df^{(q)}(x)=DT_q(f(x))\,Df(x)$ and hence
$|Df^{(q)}(x)|=|Df(x)|$. Combining \eqref{eq:Df_by_gradfnq_simple} with
\eqref{eq:grad_fnq} we obtain
\[
|Df(x)|\le C\,t^{\beta_1-1}
\]
for all $x\in D$ of the form $x=\xi+t\nu_D(\xi)$ with $\xi\in\partial D\cap U$
and $0<t<c r_1$, uniformly in $\xi$. By the tubular representation
\eqref{eq:tubular_D}, for such points one has $\delta_D(x)=t$, and therefore
\begin{equation}\label{eq:Df_collar_in_terms_delta_new}
|Df(x)|\le C\,\delta_D(x)^{\beta_1-1},
\qquad x\in D,\ \delta_D(x)<c r_1.
\end{equation}

\medskip
\noindent\textbf{4) Improving the H\"older exponent of the full boundary trace.}
Let $\xi,\eta\in\partial D\cap U$ and set $\rho:=|\xi-\eta|$.
For $\rho$ small, $C^{1,\alpha}$ regularity of $\partial D$ implies there exists a rectifiable curve
$\sigma\subset \partial D\cap U$ joining $\xi$ to $\eta$ with $\mathrm{length}(\sigma)\le C_D\rho$.

Consider the three-piece path in $D$:
\[
\gamma_1(s):=\xi+s\,\nu_D(\xi),\quad s\in[0,\rho],
\]
\[
\gamma_2:=\{\ \sigma(\tau)+\rho\,\nu_D(\sigma(\tau)):\ \tau\ \text{parametrizes }\sigma\ \},
\]
\[
\gamma_3(s):=\eta+(\rho-s)\,\nu_D(\eta),\quad s\in[0,\rho].
\]
Then $\gamma_1$ joins $\xi$ to $\xi+\rho\nu_D(\xi)$, $\gamma_2$ joins $\xi+\rho\nu_D(\xi)$ to
$\eta+\rho\nu_D(\eta)$, and $\gamma_3$ joins $\eta+\rho\nu_D(\eta)$ to $\eta$; hence
$\gamma:=\gamma_1\cup\gamma_2\cup\gamma_3$ connects $\xi$ to $\eta$ inside $D$.

Moreover, for $\rho$ small (depending only on the local tubular radius), we have
\[
\delta_D(\gamma_1(s))=s,\qquad \delta_D(\gamma_3(s))=s,
\]
and along $\gamma_2$ the distance to the boundary is constant:
\begin{equation}\label{eq:cigar_distance_new}
\delta_D(z)= \rho \qquad\text{for all }z\in\gamma_2,
\end{equation}
and $\mathrm{length}(\gamma_2)\le C\rho$.
\\
Integrating \eqref{eq:Df_collar_in_terms_delta_new} along $\gamma$ gives
\[
|f(\xi)-f(\eta)|
\le \int_\gamma |Df|\,ds
\le \int_{\gamma_1} C\,\delta_D^{\beta_1-1}\,ds
     +\int_{\gamma_2} C\,\delta_D^{\beta_1-1}\,ds
     +\int_{\gamma_3} C\,\delta_D^{{\beta_1}-1}\,ds.
\]
Using $\delta_D(\gamma_1(s))=s$ and $\delta_D(\gamma_3(s))=s$,
\[
\int_{\gamma_1} C\,\delta_D^{{\beta_1}-1}\,ds
= C\int_0^\rho s^{{\beta_1}-1}\,ds
\le C\rho^{\beta_1},
\qquad
\int_{\gamma_3} C\,\delta_D^{{\beta_1}-1}\,ds \le C\rho^{\beta_1}.
\]
Using \eqref{eq:cigar_distance_new} and $\mathrm{length}(\gamma_2)\le C\rho$,
\[
\int_{\gamma_2} C\,\delta_D^{{\beta_1}-1}\,ds
\le C\,\rho^{{\beta_1}-1}\,\mathrm{length}(\gamma_2)
\le C\rho^{\beta_1}.
\]
Therefore
\[
|f(\xi)-f(\eta)|\le C\,\rho^{\beta_1}=C|\xi-\eta|^{\beta_1},
\]
so on $\partial D\cap U$ we have improved the boundary exponent of $F=f|_{\partial D}$ from
$\beta_0$ to $\beta_1$.

\medskip
\noindent\textbf{5) Iteration and final step.}
Define inductively $\beta_{m+1}:=(1+\alpha)\beta_m$ for $m\ge 0$, with $\beta_0=\beta$.
If necessary, decrease $\beta$ slightly so that $\beta_m\neq 1$ for all $m$; this is possible since the
global H\"older estimate \eqref{betaholder} remains valid for any smaller exponent.
\\
Assume $\beta_m<1$ and that on $\partial D\cap U$,
\begin{equation}\label{eq:trace_holder_stage_m}
|f(\xi)-f(\eta)|\le C_m|\xi-\eta|^{\beta_m}.
\end{equation}
Repeating Steps~2--4 with $\beta_0$ replaced by $\beta_m$ yields
\[
|f(\xi)-f(\eta)|\le C_{m+1}|\xi-\eta|^{\beta_{m+1}},
\qquad \beta_{m+1}=(1+\alpha)\beta_m,
\]
and
\[
|Df(x)|\le \widetilde C_{m+1}\,\delta_D(x)^{\beta_{m+1}-1}
\]
for $x$ in a (possibly smaller) collar neighborhood of $\partial D\cap U$.
\\

Choose $m_0\in\mathbf{N}$ minimal such that $\beta_{m_0}>1$.
At this stage we apply Lemma~\ref{lema2} in Step~3 and obtain a uniform bound
\begin{equation}\label{eq:Df_bounded_collar}
|Df(x)|\le C_*\qquad\text{for }x\text{ with }\delta_D(x)<\delta_*,
\end{equation}
in some collar $\{x\in D:\delta_D(x)<\delta_*\}$ near $\partial D\cap U$ (with $\delta_*>0$).

\medskip
\noindent\textbf{6) Globalization and Lipschitz continuity.}
Cover $\partial D$ by finitely many boundary patches $U^{(1)},\dots,U^{(N)}$ and apply the above
argument on each patch. This yields a uniform bound \eqref{eq:Df_bounded_collar} on the global collar
\[
\mathcal C:=\{x\in D:\delta_D(x)<\delta_*\}.
\]
Set
\[
D_{\delta_*}:=\{x\in \overline D:\delta_D(x)\ge \delta_*\}.
\]
Since $\delta_D$ is continuous and $\overline D$ is compact, $D_{\delta_*}$ is compact. Moreover,
$\delta_*>0$ implies $D_{\delta_*}\subset D$. Because $f$ is harmonic in $D$, it is smooth, hence $Df$ is
continuous on $D_{\delta_*}$, and therefore
\[
\sup_{D_{\delta_*}}|Df|=\max_{D_{\delta_*}}|Df|<\infty.
\]
Combining this with \eqref{eq:Df_bounded_collar} gives $\sup_D|Df|<\infty$.

Finally, boundedness of $|Df|$ implies that $f$ is Lipschitz on $\overline D$. Indeed, since $D$ is a bounded $C^{1}$ (hence Lipschitz) domain, it is uniform and therefore quasiconvex; in particular,
any $x,y\in D$ can be joined by a rectifiable curve $\Gamma\subset D$ with $\mathrm{length}(\Gamma)\le C_D\,|x-y|$. Hence
\[
|f(x)-f(y)|\le \int_\Gamma |Df|\,ds \le (\sup_D|Df|)\,C_D\,|x-y|.
\]
Passing to $x,y\in\overline D$ by continuity completes the proof.
\end{proof}

\begin{remark}
In the planar case, Lipschitz continuity of harmonic quasiconformal mappings
between Jordan domains with $C^{1,\alpha}$ boundaries was proved in \cite{Kalaj2008MZ}
by reducing the problem to the unit disk and using Kellogg's theorem for conformal mappings
together with the Poisson representation formula.
\end{remark}
\section*{Acknowledgements}
The authors gratefully acknowledge financial support from the grants "Mathematical Analysis, Optimisation and Machine Learning" and "Complex-analytic and geometric techniques for non-Euclidean machine learning: theory and applications".

\end{document}